\pgfplotsset{compat=1.10}
\newcommand{\R}{\mathbb{R}}
\newcommand{\N}{\mathbb{N}}
\newcommand{\al}{\alpha}
\DeclarePairedDelimiter{\set}{\{}{\}}
\newcommand{\tik}{\mathcal{T}}
\def\plus{{\boldsymbol{\texttt{+}}}}
\newcommand{\signal}{x}
\newcommand{\data}{y}
\newcommand{\X}{\mathbb X}
\newcommand{\V}{\mathbb V}
\newcommand{\Y}{\mathbb Y}
\newcommand{\reg}{\mathcal R}
\newcommand{\M}{\mathcal M}
\DeclareMathOperator{\lip}{Lip}
\DeclareMathOperator{\argmin}{arg\,min}
\newcommand{\Ao}{A}
\newcommand{\Bo}{B}
\newcommand{\Ro}{R}
\newcommand{\Qo}{Q}
\newcommand{\To}{T}
\newcommand{\Do}{D}
\newcommand{\Co}{C}
\newcommand{\Lo}{L}
\newcommand{\No}{N}
\newcommand{\Po}{P}
\newcommand{\inner}[1]{\left\langle#1\right\rangle}       \newcommand{\abs}[1]{\left|#1\right|}
\newcommand{\norm}[1]{\left\lVert#1\right\rVert}
\newcommand{\snorm}[1]{\lVert#1\rVert}
\DeclareMathOperator{\id}{Id}
\DeclareMathOperator{\ran}{ran}
\newcommand{\bdist}[2]{\inner{\Ro(#1) -  \Ro(#2), #1 - #2}}
\newtheorem{theorem}{Theorem}[section]
\newtheorem{lemma}[theorem]{Lemma}
\newtheorem{proposition}[theorem]{Proposition}
\theoremstyle{definition}
\newtheorem{definition}[theorem]{Definition}
\newtheorem{example}[theorem]{Example}
\newtheorem{assumption}[theorem]{Assumption}
\newtheorem{remark}[theorem]{Remark}
\numberwithin{equation}{section}
\numberwithin{theorem}{section}
\numberwithin{figure}{section}
\title{Convergence analysis of equilibrium methods for inverse problems}
\date{June 16, 2025}
\author[1]{Daniel Obmann}
\author[2]{Gyeongha Hwang}
\author[1]{Markus Haltmeier\thanks{Corresponding author}}
\affil[1]{Department of Mathematics, University of Innsbruck, Austria\authorcr
E-mail:  \texttt{markus.haltmeier@uibk.ac.at}}
\affil[2]{Department of Mathematics, 
Yeungnam University,
South Korea\authorcr
E-mail:  \texttt{ghhwang@yu.ac.kr}
}
\begin{document}

\maketitle

\begin{abstract} 

Solving inverse problems \(\Ao \signal = \data\) is central to a variety of practically important fields such as medical imaging, remote sensing, and non-destructive testing. The most successful and theoretically best-understood method is convex variational regularization, where approximate but stable solutions are defined as minimizers of \(\snorm{\Ao(\cdot) - \data^\delta}^2 / 2 + \alpha \reg(\cdot)\), with \(\reg\) a regularization functional. Recent methods such as deep equilibrium models and plug-and-play approaches, however, go beyond variational regularization. Motivated by these innovations, we introduce implicit non-variational (INV) regularization, where approximate solutions are defined as solutions of \(\Ao^*(\Ao \signal - \data^\delta) + \alpha \Ro(\signal) = 0\) for some regularization operator \(\Ro\). When the regularization operator is the gradient of a functional, INV reduces to classical variational regularization. However, in methods like DEQ and PnP, \(\Ro\) is not a gradient field, and the existing theoretical foundation remains incomplete. To address this, we establish stability and convergence results in this broader setting, including convergence rates and stability estimates measured via a absolute Bregman distance.

\medskip\noindent \textbf{Keywords:}  
Inverse problem, Regularization, Equilibrium point, Stability guarantee, Stability estimate, Convergence, Convergence rate, Learned reconstruction, Neural network.
\end{abstract}

\section{Introduction} \label{sec:introduction}

In many practically important imaging applications, such as medical imaging, remote sensing, and non-destructive testing, it is not possible to measure the  object of interest directly, but only through indirect measurements. Assuming a linear measurement model, recovering the unknown $\signal \in \X$ requires solving the inverse problem
\begin{equation}\label{eq:ip}
\data^\delta = \Ao \signal + z^\delta \,,
\end{equation}
where $\Ao \colon \X \to \Y$ is a linear operator between Hilbert spaces modeling the forward process, $z^\delta$ represents the data perturbation, and $\data^\delta \in \Y$ denotes the measured noisy data.

In many cases, problems of the form \eqref{eq:ip} are ill-posed, meaning that the operator $\Ao$ cannot be inverted uniquely and stably. To obtain reasonable approximate solutions, one must employ regularization methods, which approximate \eqref{eq:ip} by a family of nearby well-posed problems. The prime example is variational regularization \cite{scherzer2009variational}, where approximate solutions are constructed as minimizers of the generalized Tikhonov functional
\begin{equation} \label{eq:tikhonov}
\tik_{\al} (\cdot, \data)  =  \snorm{\Ao(\cdot) - \data^\delta}^2/2 + \alpha \reg(\cdot) \,,
\end{equation}
with $\reg$ denoting the regularization functional and $\alpha > 0$ a tuning parameter that balances data fidelity and stability.

For quite some time, there has been a growing trend toward solving inverse imaging problems using learned components that are at least partially adapted to available data \cite{arridge2019solving, chen2023imaging, gilton2021deep, li2020nett, jin2017deep, kamilov2023plug, lunz2018adversarial, mccann2017convolutional, riccio2022regularization, romano2017little, ryu2019plug, tan2023provably}. While these methods often achieve superior results compared to classical reconstruction techniques, the corresponding theoretical understanding remains underdeveloped in many cases. In particular, a recent trend involves deep equilibrium (DEQ) methods \cite{gilton2021deep} and plug-and-play (PnP) models \cite{venkatakrishnan2013plug}. These methods define approximate solutions by integrating a network into standard iterative schemes, either using predefined networks (in PnP) or adjusting them in an end-to-end manner based on available data (in DEQ). 

In any case, unless the network is of gradient form (which significantly restricts the class of admissible models), these methods do not fit into the variational framework \eqref{eq:tikhonov}. The aim of this paper is to develop a more general regularization theory that covers the non-gradient case and is therefore applicable to DEQ and PnP methods.

\subsection{Implicit non-variational regularization}

Minimizers of the Tikhonov functional \eqref{eq:tikhonov} satisfy the optimality condition $\Ao^*(\Ao \signal - \data^\delta) + \alpha \nabla \reg(\signal) = 0$, where the regularization functional $\reg$ is assumed to be differentiable with gradient.

The gradient form $\nabla \reg(\signal)$ corresponds to a quite restrictive class. To overcome this, we propose and analyze implicit non-variational (INV) regularization that replaces the additive gradient term $\nabla \reg(\signal)$ with a general operator not necessarily of gradient form. More precisely, we define approximate solutions $\signal_\al^\delta$ as solutions of the INV equation
\begin{equation} \label{eq:INV}
    \To_\al (\signal, \data^\delta ) 
    \coloneqq 
    \Ao^*(\Ao \signal - \data^\delta) + \alpha \Ro(\signal) = 0 \,,
\end{equation}
where $\alpha > 0$ is a tuning parameter, $\Ro \colon \X \to \X$ is a potentially learned regularization operator, and $\data^\delta$ satisfies $\norm{\data^\delta - \data} \leq \delta$ for noise-free measurements $\data \in \Y$.

If $\Ro = \nabla \reg$ is of gradient form, the INV equation~\eqref{eq:INV} characterizes critical points of the generalized Tikhonov functional $\tik_{\al}(\cdot, \data)$ defined in \eqref{eq:tikhonov}. However, in this paper, we are interested in the more general case where $\Ro$ is not necessarily of gradient form.

One main application of such a formulation is in Deep Equilibrium Models (DEQ), where $\Ro$ is implemented as a CNN trained in a supervised manner such that the solutions of \eqref{eq:INV} approximate the provided ground truth data. In these methods, \eqref{eq:INV} is reformulated as a fixed-point equation of the form
\begin{equation}
\label{eq:grad}
\signal = \signal - \beta \left( \Ao^*(\Ao \signal - \data^\delta) + \alpha \Ro(\signal) \right),
\end{equation}
which is then solved via an associated fixed-point iteration. The network parameters are optimized so that the fixed points of \eqref{eq:grad} are close to the ground truth training data. This procedure was proposed in \cite{gilton2021deep}, where the specific form \eqref{eq:grad} is referred to as DE-grad to distinguish it from other DEQ variants and to emphasize that it generalizes the standard gradient method for minimizing \eqref{eq:tikhonov}.
Another relevant scenario arises in the context of PnP and RED-type iterative schemes  \cite{kamilov2017plug,romano2017little,fermanian2023pnp}, where $\Ro$ is a pretrained network used in place of the gradient of a regularizer.

\subsection{Main contributions}

In this paper, we analyze solutions of~\eqref{eq:INV} from the viewpoint of regularization theory. The developed theory can be applied to any iterative scheme whose limit points satisfy \eqref{eq:INV}, not restricted to  \eqref{eq:grad}. We provide stability and convergence results together with convergence rates.

To be more specific, our main contributions are as follows.

\begin{enumerate}[label=(\alph*)]
    \item As a first theoretical result, we show that for fixed $\al > 0$, solutions of~\eqref{eq:INV} depend stably on the data $\data^\delta$ (see Theorem~\ref{thm:stability}). Moreover, we show convergence in the sense that for $\data^\delta \to \data$ with exact data $\data \in \ran(\Ao)$, as $\delta \to 0$ and with a suitable parameter choice $\al = \al(\delta)$, solutions of \eqref{eq:INV} converge to solutions of the limiting problem
    \begin{equation} \label{eq:limiting}
    \Ao \signal = \data \quad \text{ and } \quad \Ro(\signal) \in \ker(\Ao)^\perp
    \end{equation}
    (see Theorem~\ref{thm:convergence}). Note that in the special case where $\Ro = \nabla \reg$, this is the first-order optimality condition of the constrained optimization problem $\argmin_\signal \{ \reg(\signal) \mid \Ao \signal = \data \}$, which defines $\reg$-minimizing solutions in variational regularization~\cite{scherzer2009variational}. In this sense, our results generalize the convergence theory from variational regularization \eqref{eq:tikhonov} to the more general non-variational form \eqref{eq:INV}.
    
   \item 
 Moreover, we derive quantitative estimates (so-called convergence rates) if the solution of \eqref{eq:limiting} satisfies the source condition $\Ro(\signal_\plus) \in \ran(\Ao^*)$. Again, this generalizes the standard source condition $\nabla \reg (\signal_\plus) \in \ran(\Ao^*)$ used to obtain convergence rates in variational regularization. In the ill-posed case, it constitutes an abstract smoothness condition for $\Ro(\signal_\plus)$ that strengthens the characterization $\Ro(\signal) \in \ker(\Ao)^\perp$ of the limiting problem. 
    Specifically, we derive error estimates for $d_\Ro(\signal, \signal_\al^\delta) := \abs{\bdist{\signal}{\signal_\al^\delta}}$ and $\norm{\Ao(\signal_\al^\delta) - \data^\delta}$ (see Theorem~\ref{thm:rates}). In analogy with the case where $\Ro$ is the gradient of a functional, we refer to $d_\Ro(\signal, \signal_\al^\delta)$ as the symmetric Bregman pairing and note that it is not a metric in the classical sense.
    
    \item 
For the special case where the regularization operator has the form $\Ro = \id - \Co$ for some contractive map $\Co$ (which we refer to as a contractive residual regularizer), we strengthen the above results. In particular, we derive uniqueness of the regularized problem \eqref{eq:INV} as well as of the limiting problem \eqref{eq:limiting}. Moreover, we show that in this case, convergence in the symmetric Bregman pairing is equivalent to convergence in norm. We further study recoverability with a given $\Ro$ and provide a priori lower bounds on the Lipschitz constant $\lip(\Co)$ necessary to ensure that a desired set of solutions is recoverable in the limit. Additionally, we provide lower bounds on the reconstruction error in cases where recovery fails. 
\end{enumerate}

To the best of our knowledge, theoretical questions from a regularization point of view such as the convergence of solutions to \eqref{eq:INV} as $\alpha, \delta \to 0$ have not been studied so far. We note, however, the work in \cite{ebner2022plug}, which provides a regularization theory for the fixed points of $\Do_\alpha(\id - \beta \Ao^*(\Ao(\cdot) - \data^\delta))$, where $\Do_\alpha$ is a denoiser. This result is relevant for prox-based variants of DEQ \cite{gilton2021deep} and PnP methods \cite{meinhardt2017learning,sun2019online}.

\subsection{Overview}

We begin the paper with a general theoretical analysis in Section~\ref{sec:general}. In Section~\ref{sec:contraction}, we focus on regularization operators that are contractive residual regularizers, demonstrating that \eqref{eq:INV} indeed defines a regularization method and further analyzing recoverability. The paper concludes with a brief summary and outlook in Section~\ref{sec:conclusion}.

\section{Analysis for general regularizers} \label{sec:general}

Let \(\Ao \colon \X \to \Y\) be a linear and continuous mapping between real Hilbert spaces \(\X\) and \(\Y\), and let \(\Ro \colon \X \to \X\) be an operator used for regularization. In this section, we derive stability, convergence, and quantitative estimates for solutions of~\eqref{eq:INV} under relatively weak assumptions on the operator \(\Ro\), as stated below.

Recall that the operator \(\Ro\) is called monotone if \(\bdist{z}{\signal} \geq 0\) for all \(\signal, z \in \X\). Gradients of convex regularizers are, in particular, monotone. We emphasize that the stability and convergence analysis developed in this section does not require monotonicity of \(\Ro\) and, in particular, goes beyond the classical variational regularization framework.

\subsection{Stability}

We begin our analysis by deriving stability of solutions of equation~\eqref{eq:INV}.   Conditions for their existence will be discussed later.

Recall that $g \colon \X \to \R$ is coercive if $g(\signal) \to \infty$ as  $\norm{\signal} \to \infty$.

\begin{assumption}[Condition for stability and convergence] \label{cond:convergence} \hfill
\begin{enumerate}[label=(A\arabic*), leftmargin=4em, topsep=0em, itemsep=0em]
    \item $\forall z \in \X \colon \signal \mapsto \inner{\Ro(\signal), \signal - z}$ is coercive. \label{cond:coercive}

    \item $\Ro$ is weak-to-weak continuous. \label{cond:grad:weakcont}
\end{enumerate}
\end{assumption}

\begin{remark}[Coercivity condition~\ref{cond:coercive}]
Condition~\ref{cond:coercive} will be referred to as the pointwise directional coercivity of~$\Ro$. If \(\Ro = \nabla \reg\) is a gradient field, then \(\inner{ \Ro(x), x - z } = D\reg[x](x - z)\) is the directional derivative of the functional \(\reg\) at point \(x\) in the direction \(x - z\). It measures how rapidly \(\reg\) increases as one moves from \(z\) to \(x\). If \(\Ro\) is a general (non-gradient) vector field, then \(\inner{ \Ro(x), x - z }\) quantifies the alignment of \(\Ro(x)\) with the vector \(x - z\). A positive value means \(\Ro(x)\) points roughly away from \(z\), while a negative value means it points toward \(z\). In particular, the pointwise directional coercivity condition~\ref{cond:coercive} ensures that \(\Ro\) pushes away from any given \(z\) strongly enough.
\end{remark}

\begin{remark}[Continuity Condition~\ref{cond:grad:weakcont}]
Condition~\ref{cond:grad:weakcont} requires  \((\Ro(\signal_k))_k \to \Ro(\hat{\signal})\) weakly whenever \((\signal_k)_k \to \hat{\signal}\) weakly. As we will see later, this guarantees that \eqref{eq:INV} is stable with respect to perturbations in \(\data^\delta\), giving stability of equilibrium methods. This assumption is, for example, satisfied by bounded, linear \(\Ro\), and hence also by deep neural networks given as compositions of bounded, linear operators and (weakly) continuous activation functions. As such, from a deep learning viewpoint, this assumption is typically satisfied and may be viewed as a technical assumption.
\end{remark}

\begin{example} \label{eq:basic}
If $\Ro(\signal) \in \partial \reg(\signal)$ is a selection of a subgradient of a convex and subdifferentiable function~$\reg$, then $\inner{\Ro(\signal), \signal - z} \geq \reg(\signal) - \reg(z)$, and thus pointwise directional coercivity of $\partial \reg$ follows from the coercivity of~$\reg$. Therefore, Assumption~\ref{cond:convergence} is satisfied for $\Ro = \partial \reg$ whenever $\reg$ is coercive and has a weak-to-weak continuous gradient. However, as we show later, Assumption~\ref{cond:convergence} is also satisfied for large classes of operators that cannot be written as gradients. Simple examples include $\Ro = \id - \Co$, where $\Co$ is a linear, non-symmetric mapping with $\norm{\Co} < 1$, or a single-layer neural network $\Co = \Lo_2 \circ \sigma \circ \Lo_1$ for linear  $\Lo_1, \Lo_2$ with $\norm{\Lo_2} \cdot \norm{\Lo_2} < 1$ and  non-expansive $\sigma$; see   Example~\ref{eq:simple}.
\end{example}

In the following, we frequently use the convexity of the data fidelity term $\mathcal{F}(\cdot) = \snorm{\Ao (\cdot) - \data^\delta}^2 / 2$, which gives the inequality
\[
2 \inner{\Ao^*(\Ao \signal - \data^\delta), z - \signal} \leq \snorm{\Ao z - \data^\delta}^2 - \snorm{\Ao \signal - \data^\delta}^2.
\]
In fact, this inequality follows from $\nabla f(x) = \Ao^*(\Ao x - \data^\delta)$ and rearrangement of the first-order characterization of convexity \( \mathcal{F}(z) \geq \mathcal{F}(x) + \inner{\nabla \mathcal{F}(x), z - x} \).

\begin{theorem}[Stability] \label{thm:stability}
Let $\al > 0$ and $(\data_k)_{k\in \N} \in \Y^\N$ with  $\data_k\to \data^\delta$. Then, any sequence $(\signal_k)_{k\in \N}$ satisfying $\To_\al(\signal_k,  \data_k) = 0$ (with $\To_\al$ defined by \eqref{eq:INV}) has a weakly convergent subsequence. Moreover, any weak cluster point of $(\signal_k)_{k\in \N}$  is a solution of $\To_\al(\signal,  \data^\delta) = 0$.
\end{theorem}

\begin{proof}
We begin by showing that the sequence $(\signal_k)_k$ is bounded. By definition of $\signal_k$ and the convexity of the data-fidelity term, for any $z \in \X$,
\begin{equation*}
    \begin{split}
    0 &= \inner{\Ao^* (\Ao \signal_k - \data_k) + \al \Ro(\signal_k), z - \signal_k} \\
    & \leq \frac{1}{2} \norm{\Ao z - \data_k}^2 - \frac{1}{2} \norm{\Ao \signal_k - \data_k}^2 + \al \inner{\Ro(\signal_k), z - \signal_k} \,.
    \end{split}
\end{equation*}
Hence, $\al \inner{\Ro(\signal_k), \signal_k - z} \leq  \norm{\Ao z - \data_k}^2/2$
where $\norm{\Ao z - \data_k}$ is  bounded.  By Condition~\ref{cond:convergence}, $(\signal_k)_{k\in \N}$ is  bounded and by reflexivity of $\X$ it has a weakly convergent subsequence.
If  $\signal_\plus$ is  a weak cluster point of $(\signal_k)_{k\in \N}$,  then taking the limit in equation~\eqref{eq:INV} and using the weak continuity of $\To_\al$ shows  $\To_\al(\signal_\plus,  \data^\delta) = 0$.
\end{proof}

\subsection{Convergence}

The next goal is to show convergence of the regularized solutions for $\delta \to 0$. 

\begin{theorem}[Convergence] \label{thm:convergence}
Let $\data \in \ran(\Ao)$, $(\data_k)_{k\in \N} \in \Y^\N$ satisfy   $\norm{\data_k - \data} \leq \delta_k$ for   $\delta_k \to 0$ and let $\al_k = \al(\delta_k)$ with $\lim_k \al_k = \lim_k \delta_k^2 / \al_k = 0$. Any sequence $(\signal_k)_{k\in \N}$ with  $\To_{\al_k} (\signal_k,  \data_k) =0$  has at least one weak cluster point. Any such cluster point $\signal_\plus$ is a solution of \eqref{eq:limiting}, that is $\Ao \signal_\plus = \data$ and $\Ro(\signal_\plus) \in \ker(\Ao)^\perp$. If the solution of \eqref{eq:limiting} is unique, then $(\signal_k)_{k\in \N}$ weakly  converges to $\signal_\plus$.
\end{theorem}

\begin{proof}
Let $\signal^*$ be any solution of  $\Ao \signal = \data$. By definition of $\signal_k$ and the convexity of the data-fidelity term we have
\begin{align*}
    0 &= \inner{\Ao^* (\Ao \signal_k - \data_k) + \al_k \Ro(\signal_k), \signal^* - \signal_k}\\
    &\leq \frac{1}{2} \norm{\Ao \signal^* - \data_k}^2 - \frac{1}{2}  \norm{\Ao \signal_k - \data_k}^2  + \al_k \inner{\Ro(\signal_k), \signal^* - \signal_k}\\
    &\leq \delta_k^2/2 + \al_k \inner{\Ro(\signal_k), \signal^* - \signal_k} \,.
\end{align*}
Hence $2 \inner{\Ro(\signal_k), \signal_k - \signal^*} \leq \delta_k^2 / \al_k$. The choice of $\al_k$ and Condition~\ref{cond:convergence} show  that $(\signal_k)_k$ is bounded and hence has a weakly convergent subsequence.

Let  $\signal_\plus$ be the limit of  any weakly convergent subsequence denoted again by $(\signal_k)_k$. By the weak continuity of $\Ro$ we have that $(\Ro(\signal_k))_k$ is bounded and thus  $0 = \lim_k \To_{\al_k}(\signal_k, \data_k) = \lim_k \Ao^*(\Ao \signal_k - \data_k) + \alpha_k \Ro (\signal_k)  = \Ao^*(\Ao \signal_\plus - \data) $. Because $\data \in \ran(\Ao)$ this shows that $\signal_\plus$ is a solution of $\Ao \signal = \data$.  Moreover, for any $z_0 \in \ker(\Ao)$ we have
\begin{equation*}
    \inner{-\Ro(\signal_\plus), z_0} = \lim_k  \inner{-\Ro(\signal_k), z_0} = \lim_k  \inner{\Ao^*(\Ao \signal_k - \data_k), z_0}/ \al_k  = 0
\end{equation*}
which gives $\Ro(\signal_\plus) \in \ker(\Ao)^\perp$.
Finally,  if the solution of \eqref{eq:limiting}  is unique, then any subsequence of $(\signal_k)_k$ has subsequence weakly converging to $\signal_\plus$, which implies that the full sequence  weakly converges to $\signal_\plus$.
\end{proof}

Theorems \ref{thm:stability} and \ref{thm:convergence} show that solutions to the equilibrium equation~\eqref{eq:INV} are indeed stable and convergent whenever $\Ro$ satisfies Condition~\ref{cond:convergence}.

\subsection{Convergence rates}

The next goal is to derive quantitative results in the form of convergence rates. To this end, we introduce a novel concept called symmetric Bregman pairing, which we define below.

\begin{definition}[Symmetric Bregman pairing]
Let \(\Ro \colon X \to X\) be an arbitrary (not necessarily monotone)  operator. We define the symmetric Bregman pairing as
\begin{equation} \label{eq:abregman}
     \forall \signal, z \in \X \colon \quad
     d_\Ro(\signal, z) := \abs{\inner{\Ro(\signal) - \Ro(z),\, \signal - z}} \,.
\end{equation}
\end{definition}

The symmetric Bregman pairing is typically non-symmetric and fails to satisfy the triangle inequality and thus is not a metric in the strict mathematical sense. It nevertheless serves as a meaningful quantitative measure between the elements \(\signal\) and \(z\) useful for our analysis.

\begin{remark}[Relation to symmetrized Bregman distance]
If \(\Ro\) is monotone, then \(d_\Ro(\signal, z) = \inner{\Ro(\signal) - \Ro(z),\, \signal - z}\) does not require the absolute value. In particular, if \(\Ro = \nabla \reg\) is the gradient of a convex functional, then \(d_\Ro(\signal, z)\) reduces to the classical symmetrized Bregman distance used for convergence rates in variational regularization~\cite{burger2007error,resmerita2006error}. The symmetric Bregman pairing extends the symmetrized Bregman distance to non-variational and non-monotone settings.
\end{remark}

\begin{remark}[Possible interpretation] \label{re:bregmaninterpretation}
     Assume that $\Ro$ is a linear, bounded, not necessarily self-adjoint positive semidefinite operator (meaning $\inner{\Ro \signal, \signal} \geq 0$ for all $x \in \X$.) Then,  \( 0 \leq \inner{\Ro \signal, \signal} =  \inner{\Ro_{\textnormal{s}} \signal, \signal} \)
     where $ \Ro_{\textnormal{s}} = (\Ro + \Ro^*)/2$ is the symmetric part of $\Ro$. Since $\Ro_{\textnormal{s}}$ is self-adjoint and positive semidefinite, there exists an operator $\Qo$ such that $\Qo^* \Qo = \Ro_{\textnormal{s}}$, and therefore \(
     \inner{\Ro \signal, \signal} =  \inner{\Qo \signal, \Qo \signal}/2 =:  \norm{\signal}_\Qo^2 / 2 \). Thus, the symmetrized Bregman distance of a linear operator $\Ro$ is the  half of the square of a weighted norm $\norm{\cdot}_\Qo$. In the case where $\Ro$ is nonlinear but smooth, a similar interpretation holds locally around a given point. For small \( h \), and locally around \( z \in \X \), it holds that
\(
     \bdist{z + h}{z} \simeq \inner{D\Ro[z] h, h} =  \norm{h}_{\Qo[z]}^2 / 2 \)
where the weight \( \Qo[z] \) now depends on \( z \) and on the local behavior of \( \Ro \).
\end{remark}

As in classical variational regularization, obtaining convergence rates requires additional assumptions on the interplay between the operator and the limiting solution $\signal_\plus$. More precisely, convergence rates for INV regularization are derived under the following assumptions:

\begin{assumption}[Conditions for convergence rates] \label{cond:rate} \mbox{}\\
The element $\signal_\plus \in \X$ satisfies:
\begin{align} \label{eq:source} 
&\Ro(\signal_\plus) \in \ran(\Ao^*) 
\\
\label{eq:range}
&\exists c,\varepsilon > 0 \; \forall z \in \M_\varepsilon(\signal_\plus) \colon \quad 
\inner{\Ro(z), \signal_\plus - z} \leq c \norm{\Ao z - \Ao \signal_\plus}
\end{align}
where $\M_\varepsilon(\signal_\plus) \coloneqq \set{z \in \X \colon \Ro(z) \in \ran(\Ao^*) \wedge  \inner{\Ro(z), z - \signal_\plus} < \varepsilon }$.
\end{assumption}

\begin{remark}[The source condition \eqref{eq:source}]
We refer to~\eqref{eq:source} as source condition, in accordance with the gradient case $\Ro = \nabla \reg$, where this is an established concept in variational regularization \cite{scherzer2009variational}. It requires that $\Ro(\signal_\plus)$ is of the form $\Ro(\signal_\plus) = \Ao^* \omega$, with $\omega \in \Y$ a source element. Note that, according to Theorem~\ref{thm:convergence}, the limiting solutions of the \eqref{eq:INV} satisfy $\Ro(\signal_\plus) \in \ker(\Ao)^\perp$. In the ill-posed case, $\ran(\Ao^*) \subsetneq \ker(\Ao)^\perp$ is a non-closed, dense subset. Thus, the source condition strengthens the limiting condition and can be seen as an abstract smoothness condition for $\Ro(\signal_\plus)$.
     
\end{remark}

\begin{remark}[Range-based monotonicity control \eqref{eq:range}] \label{rem:control}
We refer to~\eqref{eq:range} as the range-based monotonicity control condition.
In fact, suppose~\eqref{eq:source} is satisfied. Then
\begin{align*}
\inner{\Ro(z), \signal_\plus - z} 
&= \inner{\Ro(\signal_\plus), \signal_\plus - z}
- \bdist{\signal_\plus}{z}
\\        
&= \inner{\Ao^* w, \signal_\plus - z}
- \bdist{\signal_\plus}{z}
\\
&\leq c \norm{\Ao (\signal_\plus - z)} 
- \bdist{\signal_\plus}{z}.
\end{align*}
Thus Condition~\eqref{eq:range} holds whenever $\Ro$ is  monotone. In particular, any gradient field $\Ro = \nabla \reg$ is a monotone operator, and therefore in this case Assumption~\ref{cond:rate} reduces to the classical source condition $\nabla \reg(\signal_\plus) \in \ran(\Ao^*)$. However, it can also hold for non-monotone operators, in which case it quantifies and controls the local deviation from monotonicity around $\signal_\plus$, measured in terms of the residual norm $\|\Ao z - \Ao \signal_\plus\|$. This is particularly relevant in inverse problems, where the image-space norm typically reflects a stronger topology, making the condition a key tool for balancing nonlinearity and stability when deriving convergence rates.
\end{remark}

\begin{remark}[Convergence rates in the stable case]
In the stable case where $\ran(\Ao)$ is closed, 
we have $\ran(\Ao^*) = \ker(\Ao)^\perp$, and the source condition reduces to the characterization $\Ro \in \ker(\Ao)^\perp$ from Theorem~\ref{thm:convergence}.  
Moreover, by definition, for any $z \in \M_\varepsilon(\signal_\plus)$, we have $\Ro(z) = \Ao^*(\eta)$ for some bounded $\eta \in \Y$. Thus,  
\[
\inner{\Ro(z), \signal_\plus - z}
=
\inner{\eta, \Ao \signal_\plus - \Ao z}
\leq
\norm{\eta} \norm{\Ao \signal_\plus - \Ao z}
\leq
c \norm{\Ao \signal_\plus - \Ao z}.
\]
As a consequence, in the stable case, the range-based monotonicity control condition~\eqref{eq:range} holds automatically. 
Assumption~\ref{cond:rate} is therefore satisfied without any additional assumptions on the limiting solution $\signal_\plus$, 
since Theorem~\ref{thm:convergence} guarantees that $\Ro(\signal_\plus) \in \ker(\Ao)^\perp$.  
In particular, $\ran(\Ao)$ is closed whenever $\X$ is finite-dimensional.
\end{remark}

In the following we write $\al \asymp \delta$  for  $\al=\al(\delta)$  if there exist constants $C_1, C_2 > 0$ such that $C_1 \delta \leq \al \leq C_2 \delta$ as $\delta \to 0$. 

\begin{theorem}[Convergence rates] \label{thm:rates}
Let $\data \in \ran(\Ao)$ and $(\data_k)_{k\in \N} \in \Y^\N$ be a sequence of data  satisfying $\norm{\data_k - \data} \leq \delta_k$ with $\delta_k \to 0$ and $\al_k \asymp \delta_k$.
Let $(\signal_k)_k$ satisfy $\To_{\al_k}(\signal_k,  \data_k) = 0$ and  denote by  $\signal_\plus$ the weak limit of $(\signal_k)_k$, possibly after restriction to a subsequence (see Theorem \ref{thm:convergence}) and assume   $\signal_\plus$ satsfies \eqref{eq:range}. Then, the source condition \eqref{eq:source} is sufficient and necessary for the convergence rates  
\begin{enumerate}[label = (\alph*)]
    \item \label{rate1} $\norm{\Ao \signal_k - \data_k} = \mathcal{O}(\delta_k)$ for $k \to \infty$
    \item \label{rate2}  $d_\Ro(\signal_k, \signal_\plus) = \mathcal{O}(\delta_k)$ for $k \to \infty$.
\end{enumerate}
\end{theorem}

\begin{proof}
We adapt the proof presented in \cite{obmann2023convergence}. Let us first assume that the source condition $\Ro(\signal_\plus) \in \ran(\Ao^*)$ holds. From the proof of Theorem~\ref{thm:convergence} we see   $ 2 \al_k \inner{\Ro(\signal_k), \signal_k - \signal_\plus} \leq \delta_k^2$.
By assumption on $\al_k$ we have  $\limsup_k \inner{\Ro(\signal_k), \signal_k - \signal_\plus} \leq 0$ and hence $\signal_k \in \M_\varepsilon(\signal_\plus)$ for $k$ sufficiently large. For the rest of the proof suppose $\signal_k \in \M_\varepsilon(\signal_\plus)$. By definition we have $    d_\Ro(z, \signal_\plus) = \bdist{z}{\signal_\plus} + \eta \inner{\Ro(\signal_\plus) - \Ro(z), z - \signal_\plus}$, where $\eta = 0$ if $\bdist{z}{\signal_\plus} \geq 0$ and $\eta = 2$ otherwise. By assumption, $\Ro(\signal_\plus) \in \ran(\Ao^*)$ and $\inner{\Ro(z), \signal_\plus - z} \leq c \norm{\Ao z - \Ao \signal_\plus}$ for any $z \in  \M_\varepsilon(\signal_\plus)$. Hence, $
    \inner{\Ro(\signal_\plus) - \Ro(z), z - \signal_\plus} \leq c \norm{\Ao (z - \signal_\plus)}$
for some $c \geq 0$ and   $ d_\Ro(z, \signal_\plus) \leq \bdist{z}{\signal_\plus} + c \norm{\Ao (z - \signal_\plus)}$.
In particular, for $z = \signal_k$, 
\begin{equation}\label{eq:rate-a2}
  d_\Ro(\signal_k, \signal_\plus) \leq \bdist{\signal_k}{\signal_\plus} + c \norm{\Ao (\signal_k - \signal_\plus)} \,.
  \end{equation}
By construction of $\signal_k$, the convexity of the data-fidelity term, the equality $\Ao \signal_\plus = \data$ and the assumption $\norm{\data - \data_k} \leq \delta_k$ we have
\begin{multline} \label{eq:rate-a3}
    \frac{1}{2} \norm{\Ao \signal_k - \data_k}^2 + \alpha_k \inner{\Ro(\signal_k), \signal_k - \signal_\plus}
    \\
    = \frac{1}{2} \norm{\Ao \signal_k - \data_k}^2 + \inner{\Ao^* (\Ao \signal_k - \data_k), \signal_\plus - \signal_k} \leq \frac{1}{2} \norm{\Ao \signal_\plus - \data_k}^2 \leq \frac{1}{2} \delta_k^2 \,.
\end{multline}
By the source condition $\Ro(\signal_\plus) \in \ran(\Ao^*)$ it holds
\begin{multline}\label{eq:rate-a4}
    \inner{-\Ro(\signal_\plus), \signal_k - \signal_\plus}
    = \inner{\Ao^* w, \signal_\plus - \signal_k}
    \\ \leq C \norm{\Ao \signal_k - \Ao \signal_\plus} \leq C \left( \delta_k + \norm{\Ao \signal_k - \data_k} \right).
\end{multline}
Finally, from \eqref{eq:rate-a2}-\eqref{eq:rate-a4} and  Young's product inequality we obtain
\begin{align*}
    \frac{1}{2} \norm{\Ao \signal_k - \data_k}^2 + \alpha_k d_\Ro(\signal_k, \signal_\plus) &\leq \frac{1}{2} \delta_k^2 + C_1 \alpha_k \delta_k + C_2 \alpha_k \norm{\Ao \signal_k - \data_k} \\
    &\leq \frac{1}{2} \delta_k^2 + C_1 \alpha_k \delta_k + C_3 \alpha_k^2 + \frac{1}{4} \norm{\Ao \signal_k - \data_k}^2 \,,
\end{align*}
for some constants $C_1, C_2 >0$. The rates   \ref{rate1},  \ref{rate2}  then follow with $\alpha_k \asymp \delta_k$.

Assume now conversely that \ref{rate1},  \ref{rate2} hold and define $w_k \coloneqq (\Ao \signal_k - \data_k) / \al_k$. Then $(w_k)_{k \in \N} $ is  bounded and thus has a weakly convergent subsequence $(w_{k'})_{k' \in \N} $ with weak limit $w^\plus$. Along this subsequence we have $-\Ro(\signal_\plus) = \lim_k -\Ro(\signal_{k'}) = \lim_k \Ao^* w_{k'} = \Ao^* w^\plus$ and thus $\Ro(\signal_\plus) \in \ran(\Ao^*)$.
\end{proof}

For $\Ro(\signal_\plus) \notin \ran(\Ao^*)$, Theorem \ref{thm:rates} implies that $d_\Ro(\signal_k, \signal_\plus)$ cannot converge at rate $\delta_k$. However, this does not mean that no convergence rate in the absolut symmetrized Bregman distance holds. Instead, if rates hold, these rates have to be  slower than  $\delta_k$. For example, it is easy to construct examples where the convergence rate is exactly $\delta_k^a$ for some $a \in (0,1)$.

\subsection{Stability estimates}

We next derive stability estimates for the regularized problem \eqref{eq:INV} for the special class of monotone regularization operators.

\begin{theorem}[Stability estimates] \label{thm:stabilityestimates}
Assume $\Ro$ is  monotone, let $\al > 0$, $\data_1, \data_2 \in \Y$
and $\signal_1, \signal_2 \in \X$ with $\To_\al(\signal_1,  \data_1) = \To_\al(\signal_2,  \data_2) = 0$. Then
\begin{enumerate}[label = (\alph*)]
    \item \label{stab1} $\norm{\Ao (\signal_1 - \signal_2)} \leq \norm{\data_1 - \data_2}$,
    \item \label{stab2} 
   $ d_\Ro(\signal_1,\signal_2) \leq 1 / (2 \al) \norm{\data_1 - \data_2}^2$.
\end{enumerate}
\end{theorem}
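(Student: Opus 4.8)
The plan is to exploit the two equilibrium identities $\ree_\al(\signal_1,\data_1)=0$ and $\ree_\al(\signal_2,\data_2)=0$ by testing each against the difference $\signal_2-\signal_1$ (respectively $\signal_1-\signal_2$) and then adding, so that the monotonicity of $\grad$ kicks in. Concretely, I would write
\begin{align*}
0 &= \innerprod{\Ao^*(\Ao\signal_1-\data_1)+\al\grad(\signal_1),\,\signal_2-\signal_1}\,,\\
0 &= \innerprod{\Ao^*(\Ao\signal_2-\data_2)+\al\grad(\signal_2),\,\signal_1-\signal_2}\,,
\end{align*}
and add these to obtain
\[
\al\,\bregdist{\signal_1}{\signal_2}
= \innerprod{\Ao^*(\Ao\signal_1-\data_1)-\Ao^*(\Ao\signal_2-\data_2),\,\signal_2-\signal_1}\,.
\]
Expanding the right-hand side and using $\innerprod{\Ao^*u,v}=\innerprod{u,\Ao v}$, the leading term is $-\norm{\Ao(\signal_1-\signal_2)}^2$ and the remaining term is $\innerprod{\data_1-\data_2,\Ao(\signal_1-\signal_2)}$, so
\[
\al\,\bregdist{\signal_1}{\signal_2} + \norm{\Ao(\signal_1-\signal_2)}^2
= \innerprod{\data_1-\data_2,\,\Ao(\signal_1-\signal_2)}\,.
\]

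For \ref{stab1}: since $\grad$ is monotone, $\bregdist{\signal_1}{\signal_2}\geq 0$, so the displayed equality gives $\norm{\Ao(\signal_1-\signal_2)}^2 \leq \innerprod{\data_1-\data_2,\Ao(\signal_1-\signal_2)} \leq \norm{\data_1-\data_2}\,\norm{\Ao(\signal_1-\signal_2)}$ by Cauchy–Schwarz; dividing by $\norm{\Ao(\signal_1-\signal_2)}$ (trivial if this vanishes) yields $\norm{\Ao(\signal_1-\signal_2)}\leq\norm{\data_1-\data_2}$.

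For \ref{stab2}: from the same equality, $\al\,\breg(\signal_1,\signal_2) = \al\,\bregdist{\signal_1}{\signal_2} \leq \innerprod{\data_1-\data_2,\Ao(\signal_1-\signal_2)} - \norm{\Ao(\signal_1-\signal_2)}^2$, using monotonicity again so that $\breg=\bregdist{\signal_1}{\signal_2}$. Applying Young's inequality $\innerprod{a,b}\leq \norm{a}^2/2 + \norm{b}^2/2$ with $a=\data_1-\data_2$ and $b=\Ao(\signal_1-\signal_2)$ bounds the right-hand side by $\tfrac12\norm{\data_1-\data_2}^2 - \tfrac12\norm{\Ao(\signal_1-\signal_2)}^2 \leq \tfrac12\norm{\data_1-\data_2}^2$; dividing by $\al$ finishes the estimate. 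No serious obstacle is anticipated here — the only subtle point is bookkeeping the sign so that the $\Ao$-term appears with the right (negative) coefficient and can simply be discarded, and remembering that for monotone $\grad$ the absolute symmetric Bregman distance equals the signed one, so no absolute value has to be tracked.
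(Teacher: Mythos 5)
Your proof is correct and follows essentially the same route as the paper: subtracting the two equilibrium identities tested against $\signal_1-\signal_2$, identifying the key identity $\al\,\bregdist{\signal_1}{\signal_2}+\norm{\Ao(\signal_1-\signal_2)}^2=\innerprod{\data_1-\data_2,\Ao(\signal_1-\signal_2)}$, and then using monotonicity together with Cauchy--Schwarz/Young to extract both estimates. The paper packages both conclusions into the single combined inequality $\norm{\Ao(\signal_1-\signal_2)}^2/2+\al\,\breg(\signal_1,\signal_2)\leq\norm{\data_1-\data_2}^2/2$, but this is only a cosmetic difference.
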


\begin{proof}
By construction of  $\signal_1$, $\signal_2$ and Young's product inequality, 
\begin{align*}
   \al d_\Ro(\signal_1,\signal_2)
   &= \al \bdist{\signal_1}{\signal_2} 
   \\ &= -\inner{\Ao^*(\Ao \signal_1 - \data_1) - \Ao^*(\Ao \signal_2 - \data_2), \signal_1 - \signal_2}\\
    &= -\norm{\Ao(\signal_1 - \signal_2)}^2 + \inner{\data_1 - \data_2, \Ao(\signal_1 - \signal_2)} \\
    &\leq -\norm{\Ao(\signal_1 - \signal_2)}^2 + \norm{\data_1 - \data_2} \norm{\Ao(\signal_1 - \signal_2)} \\
    &\leq - \norm{\Ao(\signal_1 - \signal_2)}^2/2 
    +  \norm{\data_1 - \data_2}^2/2 \,.
\end{align*}
Thus $\norm{\Ao(\signal_1 - \signal_2)}^2/2 + \al d_\Ro(\signal_1,\signal_2) \leq  \norm{\data_1 - \data_2}^2/2$ which yields  \ref{stab1}, \ref{stab2}.
\end{proof}

\subsection{Examples}

We conclude this section with simple examples involving linear forward operators and linear regularizers. In these examples, we consider a bounded linear operator \( \Ao \colon \ell^2_0(\N) \to \ell^2_0(\N) \), which may have a non-closed range and a non-trivial kernel. Here, \( \ell^2_0(\N) \) denotes the subspace of square-integrable sequences \( x \in \ell^2(\N) \) satisfying \( x_0 = 0 \). We present a regularization operator \( \Ro \) corresponding to the standard variational regularization setting, as well as a related operator for which our theory generalizes classical variational regularization.

\begin{example}[Gradient regularizer]
The standard  choice for the regularization operators \( \Ro \colon \ell^2_0(\N) \to \ell^2_0(\N) \) is as the gradient of a convex functional  \( \reg \colon \ell^2_0(\N) \to \R \). A basic instance  is the negative discrete Laplacian, defined by
\(
\Ro x = - x_{n-1} + 2 x_n - x_{n+1},
\)
with the convention \( x_0 = 0 \). Specifically, \( \Ro \) is linear and bounded, and it is the gradient of the quadratic coercive regularizer \( \reg(x) = \frac{1}{2} \sum_{n \in \N} \abs{x_{n+1} - x_n}^2 \), thus satisfying Assumption~\ref{cond:convergence} for stability and convergence (see Example~\ref{eq:basic}).  Moreover the variational form ensures existence and uniqueness of solutions of \( \Ao^*(\Ao \signal - \data) + \alpha \Ro x = 0 \), which in this case equals the optimality condition of the Tikhonov functional \( \norm{\Ao \signal - \data}^2/2 + \alpha \reg(\signal) \). Moreover from the monotonicity of the gradient we get the stability estimates of  Theorem~\ref{thm:stabilityestimates}.
Moreover, as the gradient of a convex regularizer, \( \Ro \) is a monotone operator, and the convergence rates conditions reduce to the source condition  \( \Ro x \in \ran \Ao^* \), the typical smoothness criterion for convergence rates in variational regularization.
\end{example}

Next we provide an example extending the variational setting where the above conditions are satisfied. 

\begin{example}[Non-gradient regularizer]  
Consider the forward difference operator \( \Ro \colon \ell^2_0(\N) \to \ell^2_0(\N) \), defined by  
\((\Ro x)_n = x_n - x_{n+1} \), as regularization operator. The operator \(\Ro\) is linear and bounded but not symmetric, and thus not of gradient form. In particular, standard variational regularization theory does not apply in this case. However, all theory developed in this section applies, as we will argue next.

Decomposing \(\Ro = \Ro_{\textnormal{s}}+ \Ro_{\textnormal{a}}\) into the sum of its symmetric part \(\Ro_{\textnormal{s}} = (\Ro + \Ro^*)/2\) and antisymmetric part \(\Ro_{\textnormal{a}} = (\Ro - \Ro^*)/2\), we have  
\((\Ro_{\textnormal{s}}(x))_n = x_n - (x_{n-1} + x_{n+1})/2\) and \( (\Ro_{\textnormal{a}}(x))_n = (x_{n-1} - x_{n+1})/2\). In particular, \(\Ro_{\textnormal{s}}\) is symmetric and positive definite, and for all \(x, z\) we have  
\(
\inner{\Ro(x), x - z} \geq \lambda_{\textnormal{min}} \norm{x}^2 - \norm{\Ro} \norm{x} \norm{z},
\)
which tends to \(\infty\) as \(\norm{x} \to \infty\). Thus, \(\Ro\) satisfies the convergence and stability conditions. Further, because \(\inner{\Ro(x), x} = \inner{\Ro_{\textnormal{s}}(x), x} \geq 0\), the operator is monotone and the stability estimates apply. Finally, according to Remark~\ref{rem:control}, the convergence rates condition reduces to the source condition \(\Ro(\signal) \in \ran(\Ro_{\textnormal{a}})\). The source condition for \(\Ro\) is clearly satisfied for different elements than the source condition for the symmetric part, thus widening the scope of potential applications.
\end{example}

\section{Analysis  for contraction residual regularizers} 
\label{sec:contraction}

We now turn to a particular case where the regularization operator $\Ro$ belongs to a specific class of nonlinear regularization operators $\reg$, not necessarily of gradient form, which we will refer to as contraction residual regularizers, defined below. This class has been used in \cite{gilton2021deep} to show convergence of iteration \eqref{eq:grad}. In contrast, in this paper, we will use this  class  for the derivation of a complete regularization theory and for strengthening the results in the previous section.

\begin{definition}[Contraction residual regularizers]
We call  $\Ro \colon \X \to \X$ a contraction residual regularizer if it is of the form $\Ro = \id - \Co$  for some contractive mapping $\Co \colon \X \to \X$. That is,  there exists a constant \( L \in [0,1) \) such that 
\begin{equation} \label{eq:degrad}
    \forall \signal, z \in \X \colon \norm{(\Ro - \id)(\signal) - (\Ro - \id)(z)} \leq L \norm{\signal - z},
\end{equation}
where $\id$ is the identity operator. 
\end{definition}

We will see that the theory in Section~\ref{sec:general} on convergence, stability, and stability estimates is applicable in this case and convergence rates are equivalent to the source condition $\Ro(\signal_\plus) \in \ran(\Ao)$. However, for contraction residual regularizers, even stronger results are derived, including existence and uniqueness of solutions to the INV equation and the limiting problem. We will consider the case where \eqref{eq:INV} is solved exactly (Subsection~\ref{ssec:exact}) as well as the   case where \eqref{eq:INV} is solved approximately with a certain tolerance (Subsection~\ref{ssec:inexact}).

Note that in the context of a learned regularizer, the training procedure in~\cite{gilton2021deep} ensures contractivity of the learned component by adopting the strategy from~\cite{ryu2019plug}. Specifically, spectral normalization is applied to all layers, ensuring that each layer has a Lipschitz constant bounded above by one.

\begin{remark}[Simple examples] \label{eq:simple}
One easily constructs contraction residual regularizers which are not of gradient form. Consider, for example,  a single layer neural network function  of the form $\Ro(\signal)  =  \signal - \Lo_2 \sigma(\Lo_1 \signal + b)$. In order to be of gradient form this requires  $\Lo_2 = \Lo_1^*$ and specific choices for $\sigma$, see for example  \cite{riccio2022regularization}. On the other hand when   $\norm{\Lo_2} \cdot \norm{\Lo_2} < 1$ and  $\sigma$  is non-expansive (such as the ReLU) then then $\Ro$ is a contraction residual operator  that of non-gradient form  if  $\Lo_2 \neq  \Lo_1^*$.  
\end{remark}

We start our analysis by basic  properties of contraction residual regularizers.

\begin{lemma}[Contraction residual regularizers] \label{lemma:equivalence}
    Let $\Ro = \id - \Co$ satisfy \eqref{eq:degrad} with $L < 1$. Then the following hold:
    \begin{enumerate}[label=(\alph*)]
    \item  Lipschtz-coontinuity:  \label{cont1}
    $\forall \signal, z \in \X \colon \norm{\Ro(\signal) - \Ro(z)} \leq (1+L) \norm{\signal - z}$

   \item Cocoervivity: \label{cont2} 
      $\forall \signal, z \in \X \colon  \inner{\Ro(\signal) - \Ro(z), \signal - z} \leq (1+L) \norm{\signal - z}^2$

    \item  Strong monotonicity: \label{cont3}
    $ \forall \signal, z \in \X \colon\inner{\Ro(\signal) - \Ro(z), \signal - z} \geq (1-L) \norm{\signal - z}^2$

    \item Inverse Lipschtz-continuity:   \label{cont4}
    $\forall \signal, z \in \X \colon \norm{\Ro(\signal) - \Ro(z)} \geq (1-L) \norm{\signal - z}$
    \item  \label{cont5} The operator  $\Ro \colon \X \to \X $ is one-to-one.
\end{enumerate}
\end{lemma}

\begin{proof}
Let $\signal, z \in \X$. From \eqref{eq:degrad}, we directly get \ref{cont1}, and with the Cauchy-Schwarz inequality, this yields \ref{cont2}. By \eqref{eq:degrad} and the Cauchy-Schwarz inequality, we get  
\begin{multline*}
    \inner{\Ro(\signal) - \Ro(z), \signal - z} = \inner{(\Ro - \id)(\signal) - (\Ro - \id)(z), \signal - z} \\
    + \inner{\signal - z, \signal - z} 
    \geq -L\norm{\signal - z}^2 + \norm{\signal - z}^2,
\end{multline*}
which is \ref{cont3}, and another application of the Cauchy-Schwarz inequality gives \ref{cont4}. Finally, \ref{cont4} implies \ref{cont5} .
\end{proof}

Note that for strictly contractive residual regularizers, one can even show that $\Ro$ is onto \cite[Chapter 22]{bauschke2017convex}, which, together with \ref{cont4}, implies that it has a Lipschitz-continuous inverse $\Ro^{-1} \colon \X \to \X$.

\subsection{Theory for exact  solution}
\label{ssec:exact}

We start with the case where $\To_\al(\signal, \data^\delta) = 0$, defined by \eqref{eq:INV}, is solved exactly. Lemma~\ref{lemma:equivalence} shows that $\Ro$ is strongly monotone and suggests that \eqref{eq:INV} behaves similarly to a variational regularization with a strongly convex regularizer. Indeed, the next theorem shows that we get similar results with a full convergence theory.

\begin{theorem}[Regularization with contraction residual regularizers] \label{thm:reg} 
Let $\Ro$ be a weakly  continuous contraction residual operator with $L < 1$. Then, the following hold:

    \begin{enumerate}[label = (\alph*)]
    \item \label{thm:reg1} \textbf{Existence:}
    $\forall \data^\delta \in \Y  \; \forall\al > 0 \colon \To_\al(\signal, \data^\delta) = 0$ has a unique solution.

    \item \label{thm:reg2} \textbf{Stability:} Let $\al >0$, $\data^\delta \in \Y$, $(\data_k)_{k\in \N} \in \Y^\N$  norm-converge to $\data^\delta$  and for any $k \in \N$ let $\signal_k \in \X$ solve $\To_\al(\signal_k, \data_k) = 0$. Then, the sequence $(\signal_k)_{k\in \N}$ norm-converges to the unique solution of $\To_\al(\signal, \data^\delta) = 0$.

    \item \textbf{Stability estimates:}
    \label{thm:reg3} Let $\data_1, \data_2 \in \Y$, $\al > 0$ and  $\signal_1, \signal_2$ satisfy $\To_\al(\signal_1, \data_1) = \To_\al(\signal_2, \data_2)= 0$. Then 
    \begin{enumerate}
        \item $\norm{\Ao (\signal_1 - \signal_2)} \leq \norm{\data_1 - \data_2}$,
        \item $d_\Ro(\signal_1,\signal_2) \leq 1 / (2 \al) \norm{\data_1 - \data_2}^2$,
        \item $\norm{\signal_1 - \signal_2} \leq \sqrt{1 / (2 \al (1-L))} \norm{\data_1 - \data_2}$.
    \end{enumerate}

    \item \textbf{Convergence:}
    \label{thm:reg4}
    Let $\data \in \ran(\Ao)$, $(\delta_k)_{k\in \N}, (\al_k)_{k\in \N} \in (0, \infty)^\N$, $(\data_k)_{k\in \N} \in \Y^\N$ with $\norm{\data - \data_k} \leq \delta_k$ and $\lim_k \al_k = \lim_k \delta_k^2 / \al_k = 0$ and  let $\signal_k$ be the unique solution of $\To_{\al_k}(\signal, \data_k) = 0$ for $k \in \N$. Then, $(\signal_k)_k$ norm-converges to the unique solution $\signal_\plus$ of $\Ao \signal = \data$ with $\Ro(\signal) \in \ker(\Ao)^\perp$.

        \item \textbf{Convergence rates:} \label{thm:reg5} 
    In the  setting of \ref{thm:reg4} with  $\al_k \asymp \delta_k$  for $k \to \infty$, the source condition  $\Ro(\signal_\plus) \in \ran(\Ao^*)$ is necessary and sufficient for the rates 
    \begin{enumerate}
        \item $\norm{\Ao \signal_k - \data_k} = \mathcal{O}(\delta_k),$
        \item $d_\Ro(\signal_k, \signal_\plus) = \mathcal{O}(\delta_k),$
        \item $\norm{\signal_k - \signal_\plus} = \mathcal{O}(\sqrt{\delta_k})$.
    \end{enumerate}

\end{enumerate}
\end{theorem}

\begin{proof}
Fix $\al, \beta > 0$, $\data^\delta \in \Y$ and define  $\Phi_{\al, \beta}(\signal) \coloneqq  \signal - \beta \left( \Ao^* (\Ao \signal - \data^\delta) + \al \Ro(\signal) \right)$. For  $\signal, z \in \X$ we have 
\begin{align*}
    &\norm{\Phi_{\al, \beta}(\signal) - \Phi_{\al, \beta}(z)} 
    \\
    &\qquad \leq 
    \norm{\left((1-\beta \al) \id - \beta \Ao^* \Ao \right) (\signal - z)} + 
    \al \beta \norm{(\Ro - \id)(\signal) - (\Ro - \id)(z)} 
    \\
    &\qquad \leq 
    \norm{(1-\beta \al) \id - \beta \Ao^* \Ao} \norm{\signal - z} + \al \beta L \norm{\signal - z} \,.
\end{align*}
For $\beta \leq 1 / (\snorm{\Ao}^2 + \al )$ we have $\norm{(1-\beta \al) \id - \beta \Ao^* \Ao} \leq 1 - \beta \al$ and hence  $\Ro$ is Lipschitz-continuous with Lipschitz constant $\gamma \leq (1 - \beta \al) + L \al \beta = 1 - \al \beta (1 - L) < 1$. Existence  and uniqueness of a fixed-point of $\Phi_{\al, \beta}$  thus follows  from Banach's fixed point theorem which gives  \ref{thm:reg1}.   Moreover, Lemma~ \ref{lemma:equivalence} gives the monotonicity  and thus \ref{thm:reg3},  \ref{thm:reg5} hold true. Item \ref{thm:reg2} is a consequence  of  \ref{thm:reg3}.

It remains  to verify \ref{thm:reg4}. Lemma~\ref{lemma:equivalence} shows $\inner{\Ro(\signal), \signal - z} \geq (1-L)\norm{\signal-z}^2 + \inner{\Ro(z), \signal - z}$ and thus Condition~\ref{cond:convergence} is satisfied.  Theorem~\ref{thm:convergence} gives the existence of a weakly convergent subsequence. We next show that the solution of the limiting problem \eqref{eq:limiting} is unique. To that end let  $\signal$ satisfy   $\Ao \signal = \data$ and $\Ro(\signal) \in \ker(\Ao)^\perp$.  Then $\signal_\plus - \signal \in \ker(\Ao)$, $\Ro(\signal_\plus) - \Ro(\signal) \in \ker(\Ao)^\perp$ and thus $\inner{\Ro(\signal_\plus) - \Ro(\signal), \signal_\plus - \signal} = 0$. By Lemma~\ref{lemma:equivalence}, $0 = \inner{\Ro(\signal_\plus) - \Ro(\signal), \signal_\plus - \signal} \geq (1-L) \norm{\signal_\plus - \signal}^2$ and thus $\signal = \signal_\plus$. Following the proof of Theorem~\ref{thm:convergence} we find $2 \al_k \inner{\Ro(\signal_k), \signal_k - \signal_\plus} \leq \delta_k^2$. By Lemma~\ref{lemma:equivalence}, $
        (1-L) \norm{\signal_k - \signal_\plus}^2 \leq \inner{\Ro(\signal_k) - \Ro(\signal_\plus), \signal_k - \signal_\plus} \leq  \delta_k^2/(2 \al_k) + \inner{\Ro(\signal_\plus), \signal_\plus - \signal_k}$.
Because $(\signal_k)_k$ converges weakly to $\signal_\plus$ we get the norm convergence which completes the proof.
\end{proof}

The stability estimates in Theorem~\ref{thm:reg} show that for   fixed $\al > 0$, the regularized  reconstruction  operator $\Bo_\al$ defined  simplicity  by  $\To_\al(\Bo_\al(\data), \data) = 0$) is  injective and  Lipschitz-continuous. Further note that we  derived stability with respect to the norm as well as the symmetric Bregman pairing. Opposed to the norm estimate, the estimate in the  symmetric Bregman pairing is independent of the constant $L$. Hence, for $L$ close to $1$ the stability estimate in the norm has a large stability constant, whereas the stability estimate for the symmetric Bregman pairing does not depend on this factor.

    \begin{figure}[htb!]
    \centering
    \includestandalone{manifold}
    \caption{Interpretation of the limiting problem. According to \eqref{eq:limiting-neu} the limiting problem chooses the unique solution of $\Ao \signal = \data$  with $ \signal \in \Ro^{-1} (\ker(\Ao)^\perp)$, where  $\Ro^{-1}|_{\ker(\Ao)^\perp} \colon \ker(\Ao)^\perp  \to  \X$ can be seen as a parametrization of the  manifold of all desired solutions. }
    \label{fig:manifold}
    \end{figure}

\begin{remark}[Interpretation\label{re:limitproblem}  of the limiting problem] 
As noted above, any contractive residual operator $\Ro$ is bijective with a Lipschitz continuous inverse. Consequently, the limiting problem \eqref{eq:limiting} can be reformulated as
\begin{equation} \label{eq:limiting-neu}
 	\text{Find } \signal \text{ such that } \quad  \signal \in \Ro^{-1}(\ker(\Ao)^\perp) \cap \Ao^{-1}(\{ \data \}) \,.
\end{equation}
In particular, the restricted inverse $\Ro^{-1}|_{\ker(\Ao)^\perp} \colon \ker(\Ao)^\perp \to \X$ defines a parametrization of a manifold on which the limiting solutions must lie. Intersecting this manifold with the solution set $L(\Ao, \data) := \Ao^{-1}(\{ \data \})$ for exact data $\data$, yields the unique solution of the exact data problem determined by $\Ro$. Figure~\ref{fig:manifold} provides an illustration of this.
\end{remark}

\begin{example}[Implicit null space networks]
Consider a regularization operator the form $\Ro  = \id  - \Po_{\ker(\Ao)} \Co$ where $\Po_{\ker(\Ao)}$ is the projection on $\ker(\Ao)$ and $\Co \colon \X \to \X$ is Lipschitz.  Then~\eqref{eq:INV} is satisfied if and only if  $\Ao^*(\Ao \signal - \data^\delta) + \al \Po_{\ker(\Ao)^\perp} \signal = 0$ and $\Po_{\ker(\Ao)}(\signal - \Co(\signal)) = 0$.  With the decomposition $\signal = \signal_1 + \signal_0$ with $\signal_1 \in \ker(\Ao)^\perp$ and $\signal_0 \in \ker(\Ao)$ we find that $\signal_1 = (\Ao^* \Ao + \al \id)^{-1} \Ao^* \data^\delta$ is defined  by classical Tikhonov regularization (variational regularization \eqref{eq:tikhonov} with  the regularizer $\reg(\signal) = \norm{\signal}^2/2$). The component $\signal_0$ is then implicitly defined by the equation  $ 0 = \signal_0 - \Po_{\ker(\Ao)} \Co(\signal_1 + \signal_0)$. Adding $\signal_1$ to both sides, using the definition $\Ro  = \id  - \Po_{\ker(\Ao)} \Co$ and applying $\Ro^{-1}$ yields  $\signal_0 = \Ro^{-1}(\signal_1) - \signal_1$. Hence, the regularized solution operators defined by  \eqref{eq:INV} is seen to have the form
 \begin{equation} \label{eq:nullnet}
 	\Bo_\al 
	= (\id  - \Po_{\ker(\Ao)} \Co)^{-1} \circ ( (\Ao^* \Ao + \al \id)^{-1} \Ao^* )\,.
\end{equation} 
This  is an instance of a regularized null space network proposed in \cite{schwab2019deep} as it is the composition  of classical  regularization $(\Ao^* \Ao + \al \id)^{-1} \Ao^*$ based on the Moore-Penrose inverse followed by trainable  network $\Ro^{-1} = (\id  - \Po_{\ker(\Ao)} \Co)^{-1}$ that adds  elements of the null space of  $\Ao$. If  $\Ro$ has already  be trained, then the null space network  \eqref{eq:nullnet}  is defined implicitly by~\eqref{eq:INV}.  This  is alternative interpretation of the null-space networks proposed in       \cite{schwab2019deep} where instead  of \eqref{eq:nullnet} the explicit form $
\Bo_\al 
	= (\id  - \Po_{\ker(\Ao)} \No)  \circ ( (\Ao^* \Ao + \al \id)^{-1} \Ao^* )$ with a Lipschitz mapping $\No$ has been  proposed in the context of learned regularization. 
\end{example}

\subsection{Theory for approximate solution of \eqref{eq:INV}}
\label{ssec:inexact}

We next study the situation  where INV equation~\eqref{eq:INV} is  only solved up to a certain tolerance. For that we  assume   $\Ro$  to be   a contraction residual  regularizer and analyze the particular case that the source condition \eqref{eq:source} is satisfied.

\begin{lemma}[Error estimates with tolerance $\varepsilon_k$]  \label{lemma:tolerance} 
Let $\Ro$ be a weakly  continuous contraction residual operator with $L < 1$ and let $\signal_\plus \in \X$  satisfy the source condition  $\Ro(\signal_\plus) \in \ran(\Ao^*)$. Further let $(\data_k)_k \in \Y^\N$, $(\signal_k)_k \in \X^\N$ and $(\varepsilon_k)_k, (\delta_k)_k \in (0, \infty)^\N$ satisfy   $\norm{\Ao \signal_\plus - \data_k} \leq \delta_k$, $\norm{\To_{\al_k}(\signal_k,  \data_k)} \leq \varepsilon_k$   and $\delta_k \to 0$ as $k \to \infty$. Then for the  any parameter choice $\al_k \asymp \delta_k$ there are constants   $C_1, C_2 > 0$ such that 
\begin{equation} \label{eq:rateapprox}
        \norm{\signal_k - \signal_\plus} \leq C_1 \sqrt{\delta_k} + C_2 \frac{\varepsilon_k}{\delta_k (1-L)}  \,.
    \end{equation}
    
\end{lemma}

\begin{proof}
Let $\signal_k^*$ solve the equilibrium equation $\To_{\al_k}(\signal_k^*,  \data_k) = 0$ exactly. By  Lemma~\ref{lemma:equivalence} we have $   \norm{\signal_k - \signal_k^*} \leq \norm{\To_k(\signal_1) - \To_k(\signal_k^*)} / (\alpha_k (1-L)) \leq  \varepsilon_k/ (\alpha_k (1-L))$. With the triangle inequality and the convergence rates of  Theorem~\ref{thm:reg} this shows  \eqref{eq:rateapprox}.
\end{proof}

If \eqref{eq:INV} is solved up to a tolerance $\varepsilon > 0$ independent of the noise level $\delta$, then Lemma~\ref{lemma:tolerance} gives the bound
\(
\norm{\signal_k - \signal_\plus} \leq C_1 \sqrt{\delta_k} + C_2 \varepsilon/( (1-L) \delta_k)
\).
Hence, in this case, the rate \eqref{eq:rateapprox} does not imply convergence of the sequence $(\signal_k)_k$ to $\signal_\plus$. Instead, it indicates a form of semi-convergence behavior: initially, as the noise level $\delta_k$ decreases, right hand side decreases until a certain point, after which it eventually diverges to infinity as $\delta_k \to 0$. To deduce convergence and convergence rates, the tolerance must be chosen depending on the noise level. For example, we can derive the following result.

\begin{theorem}[INV regularization with tolerance $\varepsilon_k$] \label{thm:tol}
In the setting of Lemma~\ref{lemma:tolerance} assume additionally that $\varepsilon_k =   \delta_k \eta_k$ with  $\eta_k \sim \sqrt{\delta_k}$ as $k \to \infty$. 
\begin{enumerate}[label=(\alph*)]
\item \textbf{Convergence and convergence rates:}
\label{thm:tol-a}
As $k \to \infty$,  
\begin{enumerate}[label=(\alph*)]
        \item $\norm{\signal_k - \signal_\plus} = \mathcal{O}(\sqrt{\delta_k} )$
        \item $d_\Ro(\signal_\plus, \signal_k)
        = \mathcal{O}(\delta_k)$
        \item $\norm{\Ao \signal_k - \data_k} = \mathcal{O}(\delta_k)$.
\end{enumerate}

\item \textbf{Finite number of iterations:}
\label{thm:tol-b}
A near-solution of \eqref{eq:INV} with tolerance $\varepsilon_k$ can be constructed with $\mathcal{O} (\log(\delta_k) / \left( (L-1) \delta_k \right))$ iterations of the fixed point iteration to $\Phi_k (\signal)=  \signal - \beta (\Ao^*(\Ao \signal - \data_k)   + \al \Ro(\signal)  )$.
\end{enumerate}
\end{theorem}

\begin{proof}
The rates for $\norm{\signal_k - \signal_\plus}$ and for the  symmetrized Bregman distance follow from  Lemmas~ and~\ref{lemma:tolerance}. Further, with $\To_k  = \To_{\al_k}(\cdot,  \data_k)$ we have    $\inner{\To_k(\signal_k) - \To_k(\signal_k^*), \signal_k - \signal_k^*} \leq C \delta_k \eta_k^2$. By Lemma~\ref{lemma:equivalence},  $\norm{\Ao(\signal_k - \signal_k^*)}^2 \leq C \delta_k \eta_k^2$ and hence $\norm{\Ao \signal_k - \data_k} \leq \Tilde{C}(\delta_k + \eta_k \sqrt{\delta_k})$ as desired.  To show the last claim we assume without loss of generality that $\snorm{\Ao} = 1$. According to the proof of Theorem~\ref{thm:reg} the mapping $\Ro_k$ has  contraction constant $\gamma_k = \left(1 + \al_k L \right) / \left(1 + \al_k \right)$. By  Banach's fixed point theorem, the iterates $\signal_k^n \coloneqq \Ro_k (\signal_k^{n-1}) $  satisfy $\norm{\signal_k^* - \signal_k^n} \leq C \gamma_k^n$. It is thus sufficient to have $C \gamma_k^n \leq \sqrt{\eta_k} \delta_k $. Rearranging this inequality we find that we need on the order of $\log(\delta_k) / \log(\gamma_k)$ iterations. Since $\log(\gamma_k) = \log(1 + \al_k L) - \log(1 + \al_k) \asymp (L-1) \al_k \asymp (L-1) \delta_k$, we thus get  $\mathcal{O} (\log(\delta_k) / \left( (L-1) \delta_k \right))$ iterations.
\end{proof}

Note that, in the context of Theorem~\ref{thm:tol}~\ref{thm:tol-b}, stability of the solutions after a finite number of iterations follows from the Lipschitz continuity of each iterative update. When using a finite number of iterations, the regularized solution depends on the initial value, and hence uniqueness of a near-solution is lost.  Furthermore, a similar proof can be given for step sizes satisfying $\eta_k \geq \eta_* > 0$. In this case, the resulting solution may not coincide with $\signal_\plus$ as characterized in Theorem~\ref{thm:reg}, but instead will be close to it, with the degree of closeness depending on $\eta_*$. Additionally, the convergence rates in the data-fidelity term are only of order $\sqrt{\delta}$.

\subsection{$\Ro$-recoverability}
\label{ssec:practical}

In this section, we denote by $\V \subseteq \X$ a set of signals of interest, which may, for example, represent natural images. We further assume that $\Ro$ is a weakly continuous contractive residual regularizer.  The solution operator for \eqref{eq:INV} is denoted by $\Bo_\alpha \colon \Y \to \X$, defined by $\data^\delta \mapsto \Bo_\alpha(\data^\delta)$ as the solution of the equilibrium equation $\To_\alpha(\Bo_\alpha(\data^\delta), \data^\delta) = 0$. For $\signal \in \V$, we refer to $\Ao \signal$ as the noise-free data, and to any $\data^\delta \in \Y$ satisfying $\snorm{\data^\delta - \Ao \signal} \leq \delta$ as noisy data.  Furthermore, for $\data \in \ran(\Ao)$, we denote by $\signal_\plus = \Bo_0(\data)$ the unique solution of the limiting problem \eqref{eq:limiting} (see Theorem~\ref{thm:reg}).

According to Theorem~\ref{thm:reg}, as the noise level tends to zero, the regularized solutions $\Bo_\alpha(\data^\delta)$ of the equilibrium equation converge to the limiting solutions $\Bo_0(\Ao \signal)$. This justifies the following definition.

\begin{definition}[$\Ro$-recoverability]
Element  $\signal \in \X$ is called $\Ro$-recoverable if $ \Bo_0 \Ao \signal = \signal$. The set $\V \subseteq \X$ is called $\Ro$-recoverable if all its elements are $\Ro$-recoverable. 
\end{definition}

The following theorem gives a characterization of $\Ro$-recoverability and provides a necessary condition on the interplay of $\V$ and $\ker(\Ao)$ for this to be possible.

\begin{theorem}[$\Ro$-Recoverability\label{thm:rec}] Let $\Ro$ be a weakly continuous contraction residual regularizer with   $L<1$.  Then, for  any subset $\V \subseteq \X$  and any $\signal \in \X$ the following hold:
    \begin{enumerate}[label = (\alph*)]
        \item \label{rec1} $\V$ is $\Ro$-recoverable $\Leftrightarrow$ $ \Ro (\V) \subseteq    \ker(\Ao)^\perp $.

        \item \label{rec2} $ \Ro (\V) \subseteq   \ker(\Ao)^\perp  \Rightarrow   \sup_{\signal_1, \signal_2 \in \V} \snorm{\Po_{\ker(\Ao)}(\signal_1 - \signal_2)} / \snorm{\signal_1 - \signal_2} \leq L < 1$.

        \item \label{rec3}
        $\forall \signal \in \X \colon
        \norm{\Bo_0(\Ao \signal) - \signal} 
        \geq \snorm{\Po_{\ker(\Ao)} \Ro(\signal )} / (1+L)$.
            \end{enumerate}
\end{theorem}

\begin{proof}
Item \ref{rec1}   is an immediate consequence of Theorem~\ref{thm:reg}. Now, if $\Ro(\V) \subseteq \ker(\Ao)^\perp$  and  $\signal_1, \signal_2 \in \V$, then $L \norm{\signal_1 - \signal_2} \geq \norm{(\Ro - \id)(\signal_1) - (\Ro - \id)(\signal_2)} \geq \norm{\Po_{\ker(\Ao)}(\signal_1 - \signal_2)}$ which gives \ref{rec2}. Finally from the  Lipschitz continuity of $\Ro$ we get  $ (L+1) \norm{\Bo_0(\Ao \signal) - \signal}  \geq \norm{\Ro (\Bo_0(\Ao \signal) ) - \Ro(\signal)}  \geq \snorm{\Po_{\ker(\Ao)} \Ro(\signal )}$  because $\Ro(\Bo_0(\Ao \signal)) \in \ker(\Ao)^\perp$ which shows~\ref{rec3}.
\end{proof}

Note that condition~\ref{rec3} is relevant for the case where $\Ro(\signal) \notin \ker(\Ao)^\perp$, as it then provides a lower bound on the recovery error and quantifies the degree of non-recoverability.

\begin{remark}[A-priori lower bound for $L$]
Condition \ref{rec2} gives a lower bound $L^* = \sup_{\signal_1, \signal_2 \in \V} \snorm{\Po_{\ker(\Ao)}(\signal_1 - \signal_2)} / \snorm{\signal_1 - \signal_2}$ for the  the contraction constant $L$ of the residual part $\Ro - \id$. Notably, this constant solely depends on $\V$ and $\ker(\Ao)$ and thus can be estimated before constructing  the regularizers $\Ro$.   
\end{remark}

Next we answer the existence of a contractive residual regularizer for the special case  that $\V \subset \X$ is  a closed subspace. In this case we denote by $\Po_\V$ the orthogonal projection onto $\V$.  
Existence for the existence of an arbitrary contractive residual regularizer is interesting question hat we hope to address in future work.

\begin{proposition}[Linear case]
Let $\V \subset \X$ be  a closed linear subspace with $\snorm{\Po_{\ker(\Ao)}   \Po_{\V}} < 1$. Then the operator $\Ro = \id -  \Po_{\ker(\Ao)} \Po_\V$ is  contraction residual  regularizer  such that $\V$ is $\Ro$-recoverable.
\end{proposition}

\begin{proof}
If  $\V \subseteq \V$ where $\V$ is a closed  subspace of $\X$ with $\norm{\Po_{\ker(\Ao)}   \Po_{\V}} < 1$, then  
$\Ro = \id -  \Po_{\ker(\Ao)} \Po_\V$ clearly satisfies \eqref{eq:degrad} with $L = \Po_{\ker(\Ao)} \Po_\V$. Moreover  $\Ro(\V) \subseteq \ker(\Ao)^\perp$, which  gives the $\Ro$-recoverability. 
\end{proof}

    \begin{figure}[htb!]
        \centering
        \includestandalone{linear_not_good}
        \caption{A simple example of a linear equation in $\R^2$ where the subspace condition $\snorm{\Po_{\ker(\Ao)} \Po_{\V}} < 1$ does not hold for a linear space $\V$, despite $x_1$ and $x_2$ being $\Ro$-recoverable for a non-linear $\Ro$.}
    \label{fig:linear-restrictive}
    \end{figure}

\begin{remark}[Linear versus nonlinear $\Ro$]
While the linear subspace condition $\snorm{\Po_{\ker(\Ao)} \Po_{\V}} < 1$ is sufficient for the existence of a regularization operator for $\V$, it is restrictive. Consider, for example, Figure~\ref{fig:linear-restrictive}, where we want to recover elements $(x_1, x_2)$ in the positive closed cone $K \subseteq \R^2$ generated by $(1,0)$ and $(0,1)$ from the linear equation $x_1 - x_1 = y$. Geometrically, we see that this is easily possible with a non-linear operator $\Ro$. However, the smallest subspace $\V$ containing $K$ is the whole space $\R^2$, and thus the condition $\snorm{\Po_{\ker(\Ao)} \Po_\V} < 1$ cannot be satisfied. This example highlights that in many cases non-linear regularizers are favorable to reflect the non-linear nature of the solution set.
\end{remark}

Finally we show that  INV regularization can never yield  exact  reconstruction in the non-limiting case $\al>0$.

\begin{proposition}[Lower bound] \label{prop:lower}
    For any $\signal \in \X$  and $\al > 0$ we have 
    \begin{equation*}
        \norm{\signal - \Bo_\alpha \Ao \signal} \geq \frac{\al}{\snorm{\Ao}^2 + \al (1+L)} \snorm{\Ro(\signal)} \,.
    \end{equation*}
\end{proposition}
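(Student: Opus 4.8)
The plan is to start from the defining equation for $\signalp_\al \coloneqq \Ro_\al \Ao \signal$, namely $\Ao^*(\Ao \signalp_\al - \Ao \signal) + \al \grad(\signalp_\al) = 0$, rearrange it to isolate $\grad(\signal)$ on one side, and then estimate. Writing $e \coloneqq \signal - \signalp_\al$ for the reconstruction error, the equation gives $\al \grad(\signalp_\al) = -\Ao^* \Ao(\signalp_\al - \signal) = \Ao^*\Ao e$, hence $\al\grad(\signal) = \Ao^*\Ao e + \al(\grad(\signal) - \grad(\signalp_\al))$. Taking norms and using the triangle inequality yields
\begin{equation*}
  \al \snorm{\grad(\signal)} \leq \snorm{\Ao^*\Ao e} + \al \snorm{\grad(\signal) - \grad(\signalp_\al)} \leq \snorm{\Ao}^2 \snorm{e} + \al (1+L) \snorm{e},
\end{equation*}
where the bound $\snorm{\grad(\signal) - \grad(\signalp_\al)} \leq (1+L)\snorm{\signal - \signalp_\al}$ comes from $\grad = \id - \net$ together with the contraction estimate \eqref{eq:degrad}: $\snorm{\grad(\signal)-\grad(z)} \leq \snorm{\signal - z} + \snorm{(\grad-\id)(\signal) - (\grad-\id)(z)} \leq (1+L)\snorm{\signal - z}$. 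This is essentially the left inequality already recorded in the proof of Lemma~\ref{lemma:equivalence}.

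Dividing by $\snorm{\Ao}^2 + \al(1+L)$ immediately gives
\begin{equation*}
  \snorm{e} = \norm{\signal - \Ro_\al \Ao \signal} \geq \frac{\al}{\snorm{\Ao}^2 + \al(1+L)} \snorm{\grad(\signal)},
\end{equation*}
which is the claimed bound. So the argument is short and mechanical once one has the right rearrangement.

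There is no real obstacle here; the only point requiring a moment's care is making sure the solution operator $\Ro_\al$ is well-defined at $\data = \Ao\signal$, which is guaranteed by Theorem~\ref{thm:degrad}\ref{regularization1} (existence and uniqueness of the solution of $\ree_\al(\cdot,\datadelta)=0$ for contractive residuals), so $\signalp_\al$ genuinely exists and the manipulation above is legitimate. The estimate $\snorm{\grad(\signal) - \grad(\signalp_\al)} \leq (1+L)\snorm{\signal-\signalp_\al}$ does not need monotonicity — only the Lipschitz bound on $\grad - \id$ — so the proposition holds for every $\signal \in \X$, not just for signals of interest, exactly as stated.
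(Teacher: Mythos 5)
Your argument is correct and is essentially the paper's own proof: the paper bounds $\al\snorm{\grad(\signal)} = \norm{\ree_\al(\signal,\Ao\signal) - \ree_\al(\Ro_\al\Ao\signal,\Ao\signal)}$ by the Lipschitz constant $\snorm{\Ao}^2 + \al(1+L)$ of $\ree_\al(\cdot,\Ao\signal)$ times $\norm{\signal - \Ro_\al\Ao\signal}$, which is exactly your rearrangement with the Lipschitz estimate unpacked. No gaps; the remark on well-definedness of $\Ro_\al$ via Theorem~\ref{thm:degrad} is a sensible addition.
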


\begin{proof}
    The mapping $\signal \mapsto \To_{\al}(\signal, \data)$ is Lipschitz with $\lip(\To_{\al}) \leq \snorm{\Ao}^2 + \al (1+L)$. Hence, $\left( \snorm{\Ao}^2 + \al (1+L) \right) \norm{\signal - \Bo_\alpha \Ao \signal} \geq \norm{\To_{\al}(\Bo_\alpha \Ao \signal, \Ao \signal) - \To_{\al}(\signal, \Ao \signal)} = \al \snorm{\Ro(\signal)}$.
\end{proof}

Proposition \ref{prop:lower}    implies that the  solution of~\eqref{eq:INV} with data $\data = \Ao \signal$  recovers  $\signal$ if $\Ro(\signal) = 0$. As $\Ro$  is one-to-one this is only the case for  one  elements.

\section{Discussion and outlook} \label{sec:conclusion}

We presented a convergence analysis for solving inverse problems with the equilibrium equation~\eqref{eq:INV} including the derivation of stability estimates and convergence rates. This analysis was strengthened  in  the case where the regularization operator $\Ro$ satisfies   \eqref{eq:degrad}. In particular we derived the  limiting  problem \eqref{eq:limiting} for $\alpha \to 0$ which particular lead to a new  loss function for training the regularization operator $\Ro$. We have further shown  for finite $\alpha$, the equilibrium equation has limited  performance on any  given signal-class $\V$.

The results in our paper raise several new questions for future research. One potential direction is to weaken the assumption~\eqref{eq:degrad}. Other directions include studying a loss of the form
\begin{equation}\label{eq:loss-LIM}
 	\mathcal{L}_{\textnormal{LIM}}(\theta) 
	    \coloneqq \sum_i \snorm{\Po_{\ker(\Ao)} \Ro_{\theta}(\signal_i)}^2 \,,
\end{equation}
which is based on the limiting problem and targets $\Ro_{\theta}$-recoverability (Theorem~\ref{thm:rec})  of a set of training images $(\signal_i)_i$. In particular, analyzing this approach, comparing it to, and integrating it with DEQ and PnP training strategies constitute interesting lines of future research. Another promising direction is to derive regularization methods different from~\eqref{eq:degrad}, motivated by the structure of the limiting problem~\eqref{eq:limiting}.

While inspired by DEQ, we emphasize that our theory is not concerned with learning a particular regularizer, but rather focuses on the regularization properties of such approaches. Our framework applies broadly and includes variational regularization, DEQ, PnP, and RED methods. A key goal of our analysis is to accommodate learned regularizers, which are typically not gradients. Since actual numerical performance is highly dependent on the architecture used for $\Ro$ and the specific training strategy, we have deliberately chosen not to include numerical results. Including them could misleadingly suggest that we advocate a particular learning approach, which is not the case. Any learned or non-learned implicit regularization method of the form~\eqref{eq:INV} falls within the scope of our theoretical framework.


\begin{thebibliography}{10}

\bibitem{arridge2019solving}
S.~Arridge, P.~Maass, O.~{\"O}ktem, and C.-B. Sch{\"o}nlieb.
\newblock Solving inverse problems using data-driven models.
\newblock {\em Acta Numerica}, 28:1--174, 2019.

\bibitem{bauschke2017convex}
H.~H. Bauschke and P.~L. Combettes.
\newblock Convex analysis and monotone operator theory in hilbert spaces.
\newblock 2017.

\bibitem{burger2007error}
M.~Burger, E.~Resmerita, and L.~He.
\newblock Error estimation for bregman iterations and inverse scale space
  methods in image restoration.
\newblock {\em Computing}, 81:109--135, 2007.

\bibitem{chen2023imaging}
D.~Chen, M.~Davies, M.~J. Ehrhardt, C.-B. Sch{\"o}nlieb, F.~Sherry, and
  J.~Tachella.
\newblock Equivariant deep learning: From unrolled network design to fully
  unsupervised learning.
\newblock {\em IEEE Signal Processing Magazine}, 40(1):134--147, 2023.

\bibitem{ebner2022plug}
A.~Ebner and M.~Haltmeier.
\newblock Plug-and-play image reconstruction is a convergent regularization
  method.
\newblock {\em arXiv:2212.06881}, 2022.

\bibitem{fermanian2023pnp}
R.~Fermanian, M.~Le~Pendu, and C.~Guillemot.
\newblock Pnp-reg: Learned regularizing gradient for plug-and-play gradient
  descent.
\newblock {\em SIAM Journal on Imaging Sciences}, 16(2):585--613, 2023.

\bibitem{gilton2021deep}
D.~Gilton, G.~Ongie, and R.~Willett.
\newblock Deep equilibrium architectures for inverse problems in imaging.
\newblock {\em IEEE Transactions on Computational Imaging}, 7:1123--1133, 2021.

\bibitem{jin2017deep}
K.~H. Jin, M.~T. McCann, E.~Froustey, and M.~Unser.
\newblock Deep convolutional neural network for inverse problems in imaging.
\newblock {\em IEEE Transactions on Image Processing}, 26(9):4509--4522, 2017.

\bibitem{kamilov2023plug}
U.~S. Kamilov, C.~A. Bouman, G.~T. Buzzard, and B.~Wohlberg.
\newblock Plug-and-play methods for integrating physical and learned models in
  computational imaging: Theory, algorithms, and applications.
\newblock {\em IEEE Signal Processing Magazine}, 40(1):85--97, 2023.

\bibitem{kamilov2017plug}
U.~S. Kamilov, H.~Mansour, and B.~Wohlberg.
\newblock A plug-and-play priors approach for solving nonlinear imaging inverse
  problems.
\newblock {\em IEEE Signal Processing Letters}, 24(12):1872--1876, 2017.

\bibitem{li2020nett}
H.~Li, J.~Schwab, S.~Antholzer, and M.~Haltmeier.
\newblock {NETT}: Solving inverse problems with deep neural networks.
\newblock {\em Inverse Probl.}, 36(6):065005, 2020.

\bibitem{lunz2018adversarial}
S.~Lunz, O.~{\"O}ktem, and C.-B. Sch{\"o}nlieb.
\newblock Adversarial regularizers in inverse problems.
\newblock In {\em Advances in Neural Information Processing Systems}, pages
  8507--8516, 2018.

\bibitem{mccann2017convolutional}
M.~T. McCann, K.~H. Jin, and M.~Unser.
\newblock Convolutional neural networks for inverse problems in imaging: A
  review.
\newblock {\em IEEE Signal Processing Magazine}, 34(6):85--95, 2017.

\bibitem{meinhardt2017learning}
T.~Meinhardt, M.~Moller, C.~Hazirbas, and D.~Cremers.
\newblock Learning proximal operators: Using denoising networks for
  regularizing inverse imaging problems.
\newblock In {\em Proceedings of the IEEE International Conference on Computer
  Vision}, pages 1781--1790, 2017.

\bibitem{obmann2023convergence}
D.~Obmann and M.~Haltmeier.
\newblock Convergence rates for critical point regularization.
\newblock {\em arXiv:2302.08830}, 2023.

\bibitem{resmerita2006error}
E.~Resmerita and O.~Scherzer.
\newblock Error estimates for non-quadratic regularization and the relation to
  enhancement.
\newblock {\em Inverse Problems}, 22(3):801, 2006.

\bibitem{riccio2022regularization}
D.~Riccio, M.~J. Ehrhardt, and M.~Benning.
\newblock Regularization of inverse problems: Deep equilibrium models versus
  bilevel learning.
\newblock {\em arXiv:2206.13193}, 2022.

\bibitem{romano2017little}
Y.~Romano, M.~Elad, and P.~Milanfar.
\newblock The little engine that could: Regularization by denoising (red).
\newblock {\em SIAM Journal on Imaging Sciences}, 10(4):1804--1844, 2017.

\bibitem{ryu2019plug}
E.~Ryu, J.~Liu, S.~Wang, X.~Chen, Z.~Wang, and W.~Yin.
\newblock Plug-and-play methods provably converge with properly trained
  denoisers.
\newblock In {\em International Conference on Machine Learning}, pages
  5546--5557. PMLR, 2019.

\bibitem{scherzer2009variational}
O.~Scherzer, M.~Grasmair, H.~Grossauer, M.~Haltmeier, and F.~Lenzen.
\newblock {\em Variational methods in imaging}, volume 167 of {\em Applied
  Mathematical Sciences}.
\newblock Springer, New York, 2009.

\bibitem{schwab2019deep}
J.~Schwab, S.~Antholzer, and M.~Haltmeier.
\newblock Deep null space learning for inverse problems: convergence analysis
  and rates.
\newblock {\em Inverse Problems}, 35(2):025008, 2019.

\bibitem{sun2019online}
Y.~Sun, B.~Wohlberg, and U.~S. Kamilov.
\newblock An online plug-and-play algorithm for regularized image
  reconstruction.
\newblock {\em IEEE Transactions on Computational Imaging}, 5(3):395--408,
  2019.

\bibitem{tan2023provably}
H.~Y. Tan, S.~Mukherjee, J.~Tang, and C.-B. Sch{\"o}nlieb.
\newblock Provably convergent plug-and-play quasi-newton methods.
\newblock {\em arXiv:2303.07271}, 2023.

\bibitem{venkatakrishnan2013plug}
S.~V. Venkatakrishnan, C.~A. Bouman, and B.~Wohlberg.
\newblock Plug-and-play priors for model based reconstruction.
\newblock In {\em 2013 IEEE global conference on signal and information
  processing}, pages 945--948. IEEE, 2013.

\end{thebibliography}
\end{document}